\documentclass[fleqn,11pt,a4paper]{article}

\usepackage{a4wide}
\usepackage{graphicx}
\usepackage{amsfonts}
\usepackage{amsmath}
\usepackage{amssymb}
\usepackage{amsthm}
\usepackage{enumitem}
\setenumerate{itemsep=0pt,topsep=0pt,labelindent=0em,leftmargin=10mm}
\usepackage{xcolor}
\usepackage{float}


\hfuzz=3pt   


\newtheorem{theorem}{Theorem}
\newtheorem{lemma}[theorem]{Lemma}

\newtheorem{proposition}[theorem]{Proposition}
 \theoremstyle{definition}
\newtheorem{definition}[theorem]{Definition}
\newtheorem{remark}[theorem]{Remark}

\newfloat{algorithm}{p}

\newcommand{\floor}[1]{\lfloor #1 \rfloor }

\begin{document}

\title{On absolutely normal numbers and their discrepancy estimate}

\author{Ver\'onica Becher\qquad Adrian-Maria Scheerer \qquad Theodore Slaman}

\date{February 2017}

%
%
%


\maketitle

\vspace*{-8cm} 
This paper has been superseded by:

``On the construction of absolutely normal numbers''

Christoph Aistleitner, Ver\'onica Becher, Adrian-Maria Scheerer and Theodore Slaman

July 2017, 	arXiv:1707.02628

http://arxiv.org/abs/1707.02628
\vspace*{6cm}

\begin{abstract}
  We construct the base~$2$ expansion of an absolutely normal real number~$x$ so that for every
  integer~$b$ greater than or equal to~$2$ the discrepancy modulo~$1$ of the sequence
  $(b^0 x, b^1x, b^2 x , \ldots)$ is essentially the same as that realized by almost all real
  numbers.
\end{abstract}
\bigskip

For a real number $x$, we write     $\{x\} = x-\floor{x}$ to denote the fractional part of $x$.
For a sequence  $(x_j)_{j\geq 1}$ of real numbers in the unit interval,
the discrepancy of the $N$ first elements~is
\[
D_N((x_j)_{j\geq 1})= \sup_{0\leq u< v\leq 1} \left| \frac{\#\{ j : 1\leq j \leq N  \text{ and } u\leq x_j < v\}}{N} - (v-u)\ \right|.
\]
In this note we prove the following.

\begin{theorem}\label{thm:main}
There is an  algorithm that computes  a real number~$x$  such that 
for each integer~$b$ greater than or equal to~$2$,
\begin{align*}
\limsup_{N \to \infty} \frac{D_N( \{b^j x\}_{j\geq 0}) \sqrt{N}}{\sqrt{\log \log N}} < 3 C_b,
\end{align*}
where  
\begin{equation*}
 C_b= 166 + 664/(\sqrt{b}-1)  \text{  is  Philipp's constant}.
\end{equation*}
The algorithm computes the first $n$ digits of the expansion of~$x$
in base $2$  after performing triple-exponential in $n$ 
mathematical operations.
\end{theorem}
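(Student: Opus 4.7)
The plan is to build the binary expansion of $x$ by successive refinement of nested dyadic intervals $I_0 \supset I_1 \supset I_2 \supset \cdots$, so that the unique real in $\bigcap_n I_n$ inherits the required discrepancy estimate simultaneously in every base. The engine is a quantitative version of Philipp's law of the iterated logarithm: for each $b \geq 2$ and each integer $N$, the Lebesgue measure of the set
\[
E_{b,N} = \bigl\{x \in [0,1) : D_N(\{b^j x\}_{j\geq 0}) \sqrt{N} > 3 C_b \sqrt{\log \log N}\bigr\}
\]
decays fast enough that, when restricted to a sparse subsequence $N_k \sim \theta^k$ and summed over pairs $(b,N_k)$ with $b\geq 2$, the total measure $\sum_{b,k}|E_{b,N_k}|$ is strictly less than $1$. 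Any $x$ avoiding all but finitely many $E_{b,N_k}$ then satisfies the theorem, since the limsup over general $N$ is controlled by the limsup along the subsequence $N_k$ at the cost of absorbing a factor into the constant $3C_b$.

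\textbf{Effective refinement.} Beginning from $I_0 = [0,1)$, at stage $n$ I would activate a finite set $F_n$ of pairs $(b, N_k)$ tailored so that the dependence of $D_{N_k}(\{b^j x\}_{j \geq 0})$ on $x$ is captured by a precision of $O(n)$ bits; for such pairs, $E_{b,N_k} \cap I_n$ is a union of dyadic sub-intervals at depth $\lceil N_k \log_2 b \rceil$ and its measure is computable exactly by partitioning $I_n$ at that depth, simulating $N_k$ iterates of the map $x \mapsto bx \bmod 1$ at each left endpoint, and counting violations. I would then choose $I_{n+1}$ to be a dyadic child of $I_n$ in which the total active bad measure has been reduced by at least a factor of two, so that a uniform positive fraction of "good" points survives indefinitely. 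The sequence of chosen children writes out the binary expansion of $x$ one bit per stage, and a standard diagonal tracking of the tails $\sum_{(b,N_k)\notin F_n} |E_{b,N_k}|$ ensures that the limit point avoids cofinitely many $E_{b,N_k}$ for every $b$.

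\textbf{Main obstacle and complexity.} The hard part is producing an \emph{explicit} upper bound on $|E_{b,N}|$ strong enough that, after (i) the gap between Philipp's a.e. limsup $C_b$ and a uniform measure bound, (ii) the union over all bases $b \geq 2$, and (iii) the loss from conditioning on $I_n$ rather than on $[0,1)$, the remainder is still summable; the factor $3$ in $3C_b$ is precisely the slack needed to absorb these three losses. This is where the quantitative maximal inequality from Philipp's proof must be invoked carefully, with the series $\sum_b 1/(C_b \text{ decay})$ matched against the choice of $\theta$ defining the subsequence $N_k$. The triple-exponential operation count is forced by the layered cost of the construction: stage $n$ must handle thresholds $N$ roughly exponential in $n$ to beat the union bound, the dyadic scan at precision $N \log_2 b$ is then doubly exponential in $n$, and summing over $n$ stages introduces the third exponential.
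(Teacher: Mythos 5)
Your overall architecture --- nested dyadic intervals, finite active sets of ``bad'' events with computable measure, tail bookkeeping, and a halving argument to pick the next digit --- is exactly the paper's strategy (a Sierpi\'nski-style construction as in Lemma~\ref{lemma:algorithm}). But the measure-theoretic engine you propose has a genuine gap: you cannot control $\limsup_N D_N\sqrt{N}/\sqrt{\log\log N}$ by its values along a sparse subsequence $N_k\sim\theta^k$ ``at the cost of absorbing a factor into the constant.'' For $N_k\le N<N_{k+1}$ the trivial interpolation gives $N D_N\le N_k D_{N_k}+(N_{k+1}-N_k)$, and the error term contributes $(N_{k+1}-N_k)/\sqrt{N\log\log N}\asymp(\theta-1)\sqrt{N_k/\log\log N_k}\to\infty$ for any fixed $\theta>1$; at the $\sqrt{N\log\log N}$ scale the gaps between consecutive $N_k$ are far too large to absorb. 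This is precisely why the paper (following G\'al--G\'al and Philipp) does not work with the sets $E_{b,N}$ you define, but instead introduces two families: the sets $G(b,n,a,h)$ controlling the counting function at time exactly $2^n$ on dyadic test intervals at every scale $h$ (with thresholds $2^{-h/8}\phi(2^n)$ that are summable over $h$, needed because the discrepancy is a supremum over all intervals and the available large-deviation estimate, Lemma~\ref{lemma:bound}, applies only to a fixed dyadic interval), and the sets $H(b,n,a,h,\ell,m)$ controlling the increments $F(2^n+m2^\ell,2^{\ell-1},\cdot)$ over a nested system of blocks of lengths $2^{\ell-1}$, $n/2\le\ell\le n$, with thresholds $2^{(\ell-n-3)/6}\phi(2^n)$ summing geometrically. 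The decomposition of an arbitrary $N\in[2^n,2^{n+1})$ into $2^n$ plus such blocks (Lemma~4 of the paper, from G\'al--G\'al) is the step your proposal is missing, and without it the union of bad sets over all $N$ is not summable.

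Two smaller points. First, your claim that $E_{b,N}\cap I_n$ is a union of \emph{dyadic} intervals at depth $\lceil N\log_2 b\rceil$ is off: membership depends on the positions of $\{b^jx\}$, so the natural decomposition is into $b$-adic intervals, and the supremum over all test intervals $[u,v)$ must first be reduced to dyadic test intervals of bounded depth (the chaining inequality stated before Lemma~4) before the set becomes a finite union of intervals with computable endpoints. Second, the factor $3$ in $3C_b$ does not come from the three losses you list; in the paper it is $(1+4\delta)C_b$ with $\delta=1/2$, the slack $\delta$ being what makes the measures $\mu(G_{b,n})\le n^{-1-4\delta}$ and $\mu(H_{b,n})\le 2n^{-1-3\delta}$ summable in $n$, while the union over bases is handled separately by shrinking the starting index $z_b$ so the tail is below $\eta/2^b$. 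Your complexity discussion is qualitatively in line with the paper's count.
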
 

It is well known that for almost all  real numbers $x$ and for all integers $b$
greater than or equal to $2$,  the sequence $\{b^j x\}_{j\geq 0}$
is uniformly distributed in the unit interval, which means that its discrepancy tends to $0$
as $N$ goes to infinity.
 In~\cite{GalGal1964}, G\'al and G\'al  proved
that there is a constant $C$ such that for almost all real numbers~$x$,
\[
\limsup_{N\to \infty} \frac{D_N( \{2^j x\}_{j\geq 0}) \sqrt{N}}{\sqrt{\log \log N}} < C .
\]
Philipp~\cite{Philipp1975}  bounded  the existential constant $C$
and extended this result for lacunary sequences.
He proved  that given a sequence of positive integers $(n_j)_{j \geq 1}$ 
such that $n_{j+1}/n_j\geq \theta$ for some real number $\theta>1$, then for almost all real numbers~$x$ the sequence $\{n_j x\}_{j\geq 1}$ satisfies
\[
\limsup_{N\to \infty} \frac{D_N( \{n_j x\}_{j\geq 1} ) \sqrt{N}}{\sqrt{\log \log N}} < 
166 + 664/(\sqrt{\theta}-1).
\]
Finally,  Fukuyama~\cite{Fukuyama2008} explicitly determined, for any real~$\theta>1$, the constant~$C'_\theta$ (see \cite[Corollary]{Fukuyama2008})
such that  for almost all real numbers~$x$, 
\[
\limsup_{N\to \infty} \frac{D_N( \{\theta^j x\}_{j\geq 0} ) \sqrt{N}}{\sqrt{\log \log N}} =
C'_\theta.
\]
For instance,  in case~$\theta$ is an integer greater than or equal to $2$,
\[
C'_\theta=\left\{
\begin{array}{ll}\medskip
\sqrt{84}/9, & \text{ if } \theta=2
\\\medskip
\sqrt{2 (\theta+1)/(\theta-1)}/2, &  \text{ if  $\theta$ is odd}
\\\medskip
\sqrt{2(\theta+1)\theta(\theta-2)/(\theta-1)^3}/2, & \text{ if } \theta\geq 4 \text{ is even}.  
\end{array}
\right.
\]

The proof of Theorem~\ref{thm:main} is based on the explicit construction 
of a set of full Lebesgue measure
given by Philipp in~\cite{Philipp1975}, 
which, in turn,  follows from that in~\cite{GalGal1964}.
Unfortunately we do not know  an explicit construction 
of a set with full Lebesgue measure achieving the  constants  proved 
by Fukuyama~\cite{Fukuyama2008}.
If one could give such an  explicit construction one could obtain 
a version of  Theorem~\ref{thm:main} with the constant $3C_b$ replaced by $C'_b$.

The  algorithm  stated  in Theorem~\ref{thm:main} achieves a lower 
 discrepancy bound   than that  in Levin's work~\cite{Levin1979}.  
Given a countable set $L$ of  positive real numbers greater than~$1$, Levin 
constructs a real number $x$ such that 
for every $\theta$ in $L$ there is a constant $C''_\theta$ such that
\[
D_N(\{\theta^j x\}_{j\geq 0}) < C''_\theta \frac{(\log N)^3}{\sqrt{N}}.
\]
The recent   analysis  in~\cite{scheerer} reports no  constructions with smaller discrepancy.

For $L = \{2,3, \ldots\}$, Levin's construction  produces a computable sequence of real 
numbers that converge to an absolutely normal number~\cite{AlvarezBecher2016}. 
To compute the $n$-th term it requires  double-exponential in $n$ many operations including
 trigonometric operations. 
In contrast, the algorithm presented in Theorem~\ref{thm:main}
 is based just on  discrete mathematics and 
  yields the expansion of the computed  number by outputting one digit after the other.
Unfortunately, to compute the first $n$ digits it performs triple-exponential in $n$ many operations.
Thus, the question raised in~\cite{poly} remains open : 
\begin{quote}
Is there an absolutely normal number computable in polynomial time having a 
nearly optimal discrepancy of normality ?
\end{quote}

Finally we comment that it  is possible to prove a version of Theorem~\ref{thm:main} 
replacing the set of integer bases  by any countable set of  computable 
 real numbers greater than $1$.
The proof would remain  essentially the same except that one needs a suitable version of  
 Lemma~\ref{lemma:bound}.

\section{Primary definitions and results}

We use some tools from~\cite{GalGal1964} and~\cite{Philipp1975}.
For non-negative integers $M$ and $N$, 
for a sequence of real numbers $(x_j)_{j\geq 1}$
and  for real numbers $\alpha_1, \alpha_2$ 
such that $0\leq \alpha_1<\alpha_2\leq 1$, we define
\begin{align*}
F(M,N,\alpha_1, \alpha_2, (x_j)_{j\geq 1}) =
\big|& \#\{j: M\leq  j < M+N:  \alpha_1\leq x_j< \alpha_2 \} -  (\alpha_2-\alpha_1) N \big|.
\end{align*}
\pagebreak
We write $\mu$ to denote Lebesgue measure.

\begin{lemma}[\protect{\cite[Lemma 8]{bfp2007}, 
adapted from Hardy and Wright~\cite[Theorem 148]{HarWri08}}]\label{lemma:turing}
Let  $b$ be an integer greater than or equal to $2$. 
Let  $m$ and $N$ be  positive integers and let  $\varepsilon$ be a real such that
$6/\lfloor{N/m\rfloor}\leq \varepsilon\leq 1/b^m$. 
Then, for any non-negative integer $M$ and for any integer $a$ such that $0\leq a< b^m$,
\[
\mu \{x\in (0,1): |
F(M,N, a b^{-m}, (a+1) b^{-m}, \{b^j x\}_{j\geq 0})| >\varepsilon N \}
\]
is less than 
$2 b^{2m-2}  m\  e^{-\varepsilon^2  N b^m/(6m)}.$
\end{lemma}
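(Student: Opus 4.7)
The plan is to reduce to a sum of independent Bernoulli indicators via a blocking argument, and then apply an exponential concentration inequality in the spirit of Hardy--Wright.

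First, observe that for each $j$, the event $\{b^j x\}\in[ab^{-m},(a+1)b^{-m})$ is determined exactly by the $m$ digits $x_{j+1},\ldots,x_{j+m}$ of the base-$b$ expansion of $x$. Under Lebesgue measure on $(0,1)$, these digits are independent and uniform on $\{0,1,\ldots,b-1\}$. Consequently the indicator $I_j(x)=\mathbf{1}\{\{b^j x\}\in[ab^{-m},(a+1)b^{-m})\}$ has mean $p=b^{-m}$, and any two indicators $I_j,I_{j'}$ with $|j-j'|\geq m$ are independent.

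Next, I partition the index window $\{M,\ldots,M+N-1\}$ by residue modulo $m$ into $m$ arithmetic progressions $P_0,\ldots,P_{m-1}$ of common difference $m$; each $P_r$ has length $\lfloor N/m\rfloor$ or $\lceil N/m\rceil$. Within each $P_r$ the indicators $(I_j)_{j\in P_r}$ are mutually independent Bernoulli$(p)$. Setting $T_r(x)=\sum_{j\in P_r}I_j(x)$, the triangle inequality yields
\[
F\bigl(M,N,ab^{-m},(a+1)b^{-m},(\{b^j x\})_{j\geq 0}\bigr) \;\leq\; \sum_{r=0}^{m-1}\bigl|T_r-|P_r|p\bigr|,
\]
so the event that $F>\varepsilon N$ is contained in $\bigcup_r\bigl\{|T_r-|P_r|p|>\varepsilon N/m\bigr\}$, which I then dominate by union bound.

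Now I would apply an exponential tail inequality of Bernstein or Chernoff type to each $T_r$. With $|P_r|\approx N/m$ and $p=b^{-m}$, the hypothesis $\varepsilon\leq 1/b^m$ places the target deviation $\varepsilon N/m$ in the sub-Gaussian regime of Bernstein's inequality, producing a tail of shape $\exp(-c\,\varepsilon^2 N b^m/m)$ per class. The complementary hypothesis $6/\lfloor N/m\rfloor\leq\varepsilon$ absorbs the rounding error between $|P_r|p$ and $Np/m$ and guarantees that the allocated deviation is meaningful. Summing over $r$ contributes a factor of $m$ from the union bound, accounting for the factor of $m$ in the stated upper bound, while the exponent $\varepsilon^2 N b^m/(6m)$ reflects the standard Bernstein denominator $2p(1-p)+2\varepsilon/3$ evaluated under $\varepsilon\leq p$.

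The main obstacle I expect is bookkeeping the precise constants rather than the structure of the argument. In particular, the prefactor $2b^{2m-2}$ is larger than a raw Bernstein bound would give and most likely emerges from the specific explicit form of Theorem~148 in Hardy--Wright as adapted in~\cite{bfp2007}, together with the conversion from the digit-count formulation to the cylinder-count formulation used here. Once the reduction to independent Bernoullis is in place, verifying the exact numerical constants is computational and should not create conceptual difficulty.
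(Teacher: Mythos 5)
The paper does not actually prove this lemma---it imports it verbatim from Lemma~8 of Becher--Figueira--Picchi (2007)---but your argument is exactly the standard one behind that result: the events are determined by disjoint digit windows, so splitting the indices into the $m$ residue classes modulo $m$ yields independent Bernoulli($b^{-m}$) indicators in each class, and the triangle inequality plus a union bound reduces everything to an exponential binomial tail estimate per class. Your substitution of Bernstein's inequality for the elementary Hardy--Wright binomial estimate is harmless and the constants do come out (under $\varepsilon\le b^{-m}$ and $\lfloor N/m\rfloor\ge 12$, Bernstein gives an exponent of at least $3\varepsilon^2 N b^m/(8m)>\varepsilon^2 N b^m/(6m)$ for each class, and the stated prefactor $2b^{2m-2}m\ge 2m$ absorbs the union bound), so the proposal is correct and takes essentially the same approach as the cited source.
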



The next lemma is  similar to Lemma~\ref{lemma:turing}  but it considers
dyadic intervals instead of $b$-adic intervals.

\begin{lemma}\label{lemma:bound}
Let $b$ be an integer greater than or equal to $2$, let $k$ and $N$ be positive integers
and let  $\varepsilon$ be a real.
Then,  for any pair of integers  $M$ and $a$ such that $M\geq 0$ and 
$0\leq a<2^k$,
\[
\mu\left\{x\in (0,1): 
F(M,N,a 2^{-k},(a+1) 2^{-k} ,\{b^j x\}_{j\geq 0})  \geq \varepsilon N\right\}
\]
is less than 
$9\cdot 2^{2(k+2)} (k+2) e^{-\varepsilon^2 N b^{k+2} /(6 (k +2))}$.
\end{lemma}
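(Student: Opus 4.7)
Plan: Set $m = k+2$, so that $b$-adic cells $I_i = [i b^{-m}, (i+1) b^{-m})$ at depth $m$ are strictly finer than the dyadic interval $J = [a\,2^{-k}, (a+1)\,2^{-k})$ (whose length $2^{-k}$ is at least $4 b^{-m}$). Let $S^- = \{i : I_i \subseteq J\}$ and $S^+ = \{i : I_i \cap J \ne \emptyset\}$, and put $J^\pm = \bigcup_{i \in S^\pm} I_i$. At most one $b$-adic cell can overflow $J$ at each endpoint, hence $|J^+ \setminus J^-| \le 2 b^{-m}$ and $|S^\pm| \le 2^{-k} b^{k+2} + 2$.

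From the pointwise sandwich $\mathbf{1}_{J^-} \le \mathbf{1}_J \le \mathbf{1}_{J^+}$ together with the measure difference bound above, one checks that, for every $x$,
\[
F\bigl(M, N, a 2^{-k}, (a+1) 2^{-k}, \{b^j x\}_{j \ge 0}\bigr) \le \max\!\bigl(|T_{J^+}(x) - |J^+| N|,\ |T_{J^-}(x) - |J^-| N|\bigr) + 2 b^{-m} N,
\]
where $T_A(x) = \#\{j : M \le j < M+N,\ \{b^j x\} \in A\}$. Hence, provided $\varepsilon > 2 b^{-m}$, the set $\{x : F \ge \varepsilon N\}$ is contained in the union over $\pm$ of the events $\{x : |T_{J^\pm}(x) - |J^\pm| N| \ge (\varepsilon - 2 b^{-m}) N\}$.

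The next step is to bound the measure of each such event. Writing $T_{J^\pm}(x) - |J^\pm| N = \sum_{i \in S^\pm} (T_{I_i}(x) - b^{-m} N)$, the pigeonhole principle locates an index $i \in S^\pm$ at which the single-cell deviation $|T_{I_i}(x) - b^{-m} N|$ is at least $(\varepsilon - 2 b^{-m}) N/|S^\pm|$. Lemma~\ref{lemma:turing} applied with $m = k+2$ and this reduced threshold bounds the measure of such exceptional $x$ for each fixed $i$, after which a union bound over the $|S^+| + |S^-|$ candidate indices and the two signs $\pm$ yields the final estimate. As an alternative (which may be what is needed to recover the exponent in its clean form), one bypasses the pigeonhole step by applying a Bernstein--Chernoff bound directly to the Bernoulli random variable $\mathbf{1}_{J^\pm}(\{b^j x\})$, exploiting that it depends only on the $b$-ary digits of $x$ at positions $j+1, \ldots, j+m$ and hence admits the same arithmetic-progression independence (with step $m$) that underpins the proof of Lemma~\ref{lemma:turing}.

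The main obstacle will be the precise bookkeeping of constants so that the exponent takes the stated form $\varepsilon^2 N b^{k+2}/(6(k+2))$ with the original $\varepsilon$ (rather than the $\varepsilon/|S^\pm|$ produced by a naive pigeonhole), and so that the combinatorial factor $|S^\pm|$ together with the Lemma~\ref{lemma:turing} prefactors $2 b^{2m-2}$ and $m$ is absorbed into the stated constant $9 \cdot 2^{2(k+2)}(k+2)$. The additive slack $2 b^{-m} N$ inherited from the sandwich must be controlled by an implicit assumption $\varepsilon \gg b^{-m}$, and the validity condition on the internal Chernoff threshold (analogous to $\varepsilon \le b^{-m}$ in Lemma~\ref{lemma:turing}) must be verified for the reduced threshold; carrying out these calculations carefully, and choosing the right version of the Chernoff bound at the inner step, is the delicate part of the proof.
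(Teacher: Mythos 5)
The paper itself states this lemma without giving a proof (the remark that follows merely defers to Philipp's more general but less explicit proposition), so your proposal must stand on its own; as written it does not close, and the obstruction is quantitative rather than organizational. Your opening reduction is the right one: sandwich $J=[a2^{-k},(a+1)2^{-k})$ between unions $J^-\subseteq J\subseteq J^+$ of $b$-adic cells of depth $m=k+2$, absorb the boundary error $2b^{-m}N$, and reduce to deviations of the counts on $J^\pm$. But the route you describe in detail --- pigeonholing over the $|S^\pm|\approx 2^{-k}b^{k+2}$ constituent cells and applying Lemma~\ref{lemma:turing} at threshold $(\varepsilon-2b^{-m})/|S^\pm|$ --- provably cannot reach the stated bound: squaring the reduced threshold turns the exponent into $\varepsilon^2N\,2^{2k}b^{-(k+2)}/(6(k+2))$ instead of $\varepsilon^2N\,b^{k+2}/(6(k+2))$, a loss unbounded in $b$ and $k$. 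You flag this yourself, but the repair is exactly the ``delicate part'' you leave undone, so the proposal contains no complete argument for the inequality that is the entire content of the lemma.

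Your fallback --- a Chernoff bound applied directly to $\mathbf{1}_{J^\pm}(\{b^jx\})$, using that it depends only on the $b$-ary digits $j+1,\dots,j+m$ of $x$ and is therefore i.i.d.\ along each residue class of $j$ modulo $m$ --- is the structurally correct argument; it is how Lemma~\ref{lemma:turing} itself is proved. Carry it out, though, and the Bernoulli parameter is $|J^\pm|\approx 2^{-k}$, so the exponent you obtain is of the form $\varepsilon^2N\,2^{k+2}/(C(k+2))$: a power of $2$, not of $b$. For $b=2$ this matches the target up to constants, but for larger $b$ the claimed $b^{k+2}$ is strictly stronger than what the method yields, and in fact stronger than what is true: for $b=4$, $k=1$, $a=0$, the count of $j<N$ with $\{4^jx\}\in[0,1/2)$ is exactly binomial with parameters $N$ and $1/2$, whose two-sided tail at $\varepsilon N$ has order $e^{-2\varepsilon^2N}$, and this exceeds the claimed $1728\,e^{-64\varepsilon^2N/18}$ once $\varepsilon^2N$ is moderately large. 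So no amount of bookkeeping will deliver the statement as printed. What your second method does prove --- under explicit side conditions of the type $2b^{-(k+2)}<\varepsilon\lesssim 2^{-k}$ and $N$ large relative to $k$, which you correctly sense are needed but never pin down, and which are also absent from the lemma as stated --- is the version with $2^{k+2}$ in place of $b^{k+2}$; that weaker version is all the application in Lemma~\ref{lemma:main} actually uses. A correct write-up should prove and restate that version rather than try to absorb constants into $9\cdot2^{2(k+2)}(k+2)$.
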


\begin{remark}
In~\cite{Philipp1975}, Philipp  proves a proposition more general than Lemma~\ref{lemma:bound}.
His result yields the same order of magnitude but does not make explicit the underlying constant
while Lemma~\ref{lemma:bound}~does.
\end{remark}

Clearly,
for arbitrary reals $\alpha_1, \alpha_2$ such that $0\leq \alpha_1< \alpha_2\leq 1$, 
for any sequence $(x_j)_{j\geq 1}$
and for any non-negative  integers $M$, $N$ and  $k$,
\[
|F(0,N,\alpha_1,\alpha_2,(x_j)_{j\geq 1})|\leq N/2^{k-1} +
\sum_{m=1}^k \max_{ 0\leq a < 2^m} |F(0,N,  a2^{-m},  (a+1) 2^{-m},(x_j)_{j\geq 1})|.
\]

\begin{lemma}[\protect{\cite[Lemma 4]{Philipp1975}}, adapted from \protect{\cite[Lemma 3.10]{GalGal1964}}]
Let $b$ be an integer greater than or equal to~$2$,
let $N$ be a positive integer and let $n$ be such that 
$2^n\leq N< 2^{n+1}$.
Then, there are integers $m_1,\ldots, m_n$  
with  $0\leq m_\ell \leq 2^{n-\ell}-1$ for  $\ell=1, \ldots, n$,
such that for any positive integer $h$ and any $a$,
with $0\leq a<2^h$,
\begin{align*}
F(0,N,a 2^{-h}, (a+1) 2^{-h},\{b^j x\}_{j\geq 0}) & \leq  
 N^{1/3}+ F(0,2^n,a 2^{-h}, (a+1) 2^{-h}, \{b^j x\}_{j\geq 0}) 
\\
+&\sum_{\ell=n/2}^{n } F(2^n+ m_\ell 2^\ell, 2^{\ell-1},a 2^{-h}, (a+1) 2^{-h}, \{b^j x\}_{j\geq 0}).
\end{align*}
 \end{lemma}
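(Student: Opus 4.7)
The plan is to prove the inequality by a purely combinatorial dyadic decomposition of the index set $\{0,1,\dots,N-1\}$, exploiting that $N=2^n+r$ with $0\leq r<2^n$. The decomposition depends only on $N$, not on $h$, $a$, or on the sequence $\{b^j x\}_{j\geq 0}$, so once the integers $m_\ell$ are fixed the inequality will hold uniformly over the remaining parameters.

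The first step is to split off the initial block of length $2^n$ by the triangle inequality: since both the count and the expected count are additive over disjoint index intervals,
\[
F(0,N,\alpha_1,\alpha_2,\{b^j x\}_{j\geq 0})\;\leq\;F(0,2^n,\alpha_1,\alpha_2,\{b^j x\}_{j\geq 0})\;+\;E,
\]
where $\alpha_1=a 2^{-h}$, $\alpha_2=(a+1) 2^{-h}$, and $E$ is the analogous count-versus-measure error on the residual index block $[2^n,N)$ of length $r$.

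The second step is to decompose $[2^n,2^n+r)$ into disjoint dyadic blocks using the binary expansion $r=\sum_{\ell=1}^{n}b_\ell\, 2^{\ell-1}$ with $b_\ell\in\{0,1\}$, processing $\ell$ from $n$ down to $\ceil{n/2}$ greedily: when $b_\ell=1$, I place a block of length $2^{\ell-1}$ at the current pointer. Just before step $\ell$, the offset of the pointer from $2^n$ equals $\sum_{\ell'>\ell}b_{\ell'}\,2^{\ell'-1}$, which is a multiple of $2^\ell$, so it has the required form $m_\ell\, 2^\ell$; the bound $m_\ell\leq 2^{n-\ell}-1$ follows from $\sum_{\ell'=\ell+1}^{n}2^{\ell'-\ell-1}=2^{n-\ell}-1$. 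For the indices $\ell$ where $b_\ell=0$ (and for the remaining $\ell<\ceil{n/2}$), one simply sets $m_\ell=0$ and appends the non-negative term $F(2^n,2^{\ell-1},\alpha_1,\alpha_2,\{b^j x\}_{j\geq 0})$ to the right-hand side; this only weakens the inequality while matching the exact shape of the sum in the statement.

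Finally, I would apply the triangle inequality once more: the count-minus-measure errors on the placed blocks yield the sum $\sum_{\ell=\ceil{n/2}}^{n}F(2^n+m_\ell\, 2^\ell,\,2^{\ell-1},\alpha_1,\alpha_2,\{b^j x\}_{j\geq 0})$, and the uncovered tail of $[2^n,2^n+r)$ has length strictly less than $2^{\ceil{n/2}-1}$, so its count-versus-measure contribution is bounded by its length and is absorbed into the coarse polynomial-in-$N$ remainder (the $N^{1/3}$ term, as is typical in the G\'al--G\'al convention). The main obstacle is the bookkeeping of the greedy placement: maintaining the divisibility of the running offsets by $2^\ell$ jointly with the bound $m_\ell\leq 2^{n-\ell}-1$ as $\ell$ decreases, and checking that no placed block spills past index $2^{n+1}$. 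Everything else is a mechanical triangle inequality for sums of counts.
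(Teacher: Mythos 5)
The paper does not actually prove this lemma; it quotes it from Philipp's Lemma~4 (itself adapted from G\'al and G\'al), so your attempt has to be judged against the standard argument behind that citation. Your decomposition is exactly that argument: split off $[0,2^n)$, expand $r=N-2^n$ in binary, and for each nonzero bit place a block of length $2^{\ell-1}$ at offset $m_\ell 2^\ell$ from $2^n$; your verification that the running offset is a multiple of $2^\ell$ and that $m_\ell\le 2^{n-\ell}-1$ is correct, as is padding the missing indices $\ell$ with $m_\ell=0$.

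The gap is in the final absorption step. The uncovered tail consists of the bits of $r$ below position $\lceil n/2\rceil$, so its length can be as large as $2^{\lceil n/2\rceil-1}-1$, which is of order $\sqrt{N}/2$ and \emph{not} $O(N^{1/3})$; the ratio of the two grows like $N^{1/6}$, so the claim that the tail ``is absorbed into the $N^{1/3}$ term'' fails for all large $N$. Moreover this cannot be repaired within the prescribed block structure: the total length of $[0,2^n)$ together with one block of each length $2^{\ell-1}$ for $\lceil n/2\rceil\le\ell\le n$ is $2^{n+1}-2^{\lceil n/2\rceil-1}$, so when $N$ is close to $2^{n+1}$ at least $2^{\lceil n/2\rceil-1}-1$ indices are necessarily left uncovered whatever the $m_\ell$ are, and on an uncovered set the count-minus-measure error can genuinely equal its full length (take $\alpha_2-\alpha_1$ tiny and arrange the relevant $x_j$ to land in $[\alpha_1,\alpha_2)$). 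What your argument does prove is the inequality with the remainder $N^{1/3}$ replaced by a term of order $\sqrt{N}$ --- which is all the application needs, since $\sqrt{N}=o\bigl(\sqrt{N\log\log N}\bigr)$ --- but it does not establish the statement as written, and you should flag that mismatch explicitly rather than assert the absorption.
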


Let $\eta$ and $\delta$ be  positive reals. 
For each integer $b$ greater than or equal to $2$ 
and for  each positive integer $N$ let 
\begin{align*}
\tilde C_b &=1/2 + 2/(\sqrt{b}-1),
\\
\phi(N)&=2(1+2\delta) \tilde C_b (N\log\log N)^{1/2},
\\
T(N)&=\lfloor \log N/\log 4\rfloor+1.
\end{align*}
For integers $b, n, a, h, \ell$ and $m$ such that 
\begin{align*}
b\geq 2, \ n\geq 1, \  0\leq a< 2^{T(2^n)}, \ 
1\leq h\leq T(2^n), \ n/2\leq \ell\leq n, \text{ and } 1\leq m\leq 2^{n/2},
\end{align*}
define the following sets
\begin{align*}
G(b,n, a, h) =& \{x\in(0,1): F(0,2^n, \alpha_1,\alpha_2,\{b^j x\}_{j\geq 0})\geq  2^{-h/8} \phi(2^n)\}, \\
\text{where }&\alpha_1=a 2^{-(h+1)}, \  \alpha_2= (a+1) 2^{-(h+1)}, \text{ if } 1\leq h<T(2^n);
\\
\text{and }  &\alpha_1=a 2^{-T(2^n)}, \ \alpha_2= (a+1) 2^{-T(2^n)}, \text{ if } h=T(2^n).
\\
H(b,n,a, h, \ell, m)=& \{x\in(0,1): F(2^n+ m2^{\ell},2^{\ell -1},\beta_1,\beta_2,\{b^j x\}_{j\geq 0})\geq  2^{-h/8} 2^{(\ell-n-3)/6}\phi(2^n)\},
\\
\text{where }&\beta_1=a 2^{-(h+1)}, \ \beta_2= (a+1) 2^{-(h+1)}, \text{ if } 1\leq h<T(2^{\ell-1});
\\
\text{and }  &\beta_1=a 2^{-T(2^{\ell-1})}, \ \beta_2= (a+1) 2^{-T(2^{\ell-1})}, \text{ if } h=T(2^{\ell-1}).
\\
G_{b,n}=&
\bigcup_{h=1}^{T(2^n)}
\bigcup_{a=0}^{2^{h}-1} G(b,n,a,h),\\
H_{b,n}=&
\bigcup_{h=1}^{T(2^n)}
\bigcup_{a=0}^{2^{h}-1}
\bigcup_{\ell=n/2}^{n}
\bigcup_{m=1}^{2^{n-\ell}}
H(b,n,a, h, \ell, m).
\end{align*}

\begin{lemma}\label{lemma:main}
Let $\eta$ and $ \delta$ be positive real numbers. 
For each  $n>e^{6/(\delta \log 2)}$ and for every  $b \geq 2$,
\[
\mu(G_{b,n})= n^{-1-4\delta}, \quad \mu(H_{b,n})=2 n^{-1-3\delta},
\]
and there is $n_0=n_0(\eta, \delta)$ such that
\[
 \mu\left(\bigcup_{n\geq n_0}(G_{b,n}\cup H_{b,n})\right)<\eta
\] 
and such that for every real  $x$ outside $\bigcup_{n\geq n_0}(G_{b,n}\cup H_{b,n})$,
\[
\limsup_{N\to \infty } \frac{D_N( \{b^n x\}_{n\geq 0}) \sqrt{N}}{\sqrt{\log \log N}} < (1+4\delta) C_b,
\]
where $C_b$ is Philipp's constant, $C_b= 166 + 664/(\sqrt{b}-1)$.
\end{lemma}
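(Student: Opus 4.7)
The plan is to prove the three conclusions in order, following the G\'al--G\'al--Philipp blueprint.

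For the measure identities, I would apply Lemma~\ref{lemma:bound} to each set in the unions defining $G_{b,n}$ and $H_{b,n}$. For $G(b,n,a,h)$, take $N = 2^n$, $k = h+1$ (or $k = T(2^n)$ in the boundary case), and $\varepsilon = 2^{-h/8}\phi(2^n)/2^n$. Since $\phi(2^n)^2$ is of order $2^n \log n$, the exponent $\varepsilon^2 N b^{k+2}/(6(k+2))$ in Lemma~\ref{lemma:bound} is of order $(b/2^{1/4})^h \log n / h$; the resulting super-polynomial decay defeats the union bound over the $2^h$ values of $a$ and the $T(2^n)$ values of $h$. The prefactor $2(1+2\delta)\tilde{C}_b$ inside $\phi$ is calibrated so that $\mu(G_{b,n})$ equals $n^{-1-4\delta}$. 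The same argument applied to $H(b,n,a,h,\ell,m)$ with $N = 2^{\ell-1}$ and with the threshold discounted by $2^{(\ell-n-3)/6}$ keeps the exponent essentially the same uniformly in $\ell$, so the unions over $\ell$ and $m$ cost only polynomially in $n$ and produce $\mu(H_{b,n}) = 2n^{-1-3\delta}$. The hypothesis $n > e^{6/(\delta\log 2)}$ ensures these geometric estimates hold from the first term onward. Since both $\sum n^{-1-4\delta}$ and $\sum n^{-1-3\delta}$ converge, picking $n_0(\eta,\delta)$ large enough kills the tail and gives the second conclusion.

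For the discrepancy bound, fix $x$ outside $\bigcup_{n\geq n_0}(G_{b,n}\cup H_{b,n})$ and $N$ large with $2^n \leq N < 2^{n+1}$, $n \geq n_0$. The approximation inequality stated just before Philipp's decomposition, applied with $k = T(N)$, reduces $D_N(\{b^j x\})\cdot N$ to $O(1)$ plus $\sum_{h=1}^{k} \max_{0\leq a<2^h} |F(0,N, a2^{-h}, (a+1)2^{-h}, \{b^j x\})|$. Each dyadic $F$ is then decomposed by Philipp's lemma into $N^{1/3} + F(0,2^n,\cdot) + \sum_{\ell=n/2}^{n} F(2^n + m_\ell 2^\ell, 2^{\ell-1}, \cdot)$. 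Since $x$ lies outside $G_{b,n}\cup H_{b,n}$, these contributions are bounded by $N^{1/3}$, by $2^{-h/8}\phi(2^n)$, and by $2^{-h/8}\,2^{(\ell-n-3)/6}\phi(2^n)$ respectively; summation over $\ell$ and then over $h$ is geometric, producing an absolute constant $K$ times $\phi(2^n)$. Dividing by $\sqrt{N\log\log N}$ and substituting $\phi(2^n) = 2(1+2\delta)\tilde{C}_b \sqrt{2^n \log\log 2^n}$, together with the identity $C_b = 332\,\tilde{C}_b$, matches the target $(1+4\delta)C_b$ with room for the $o(1)$ slack between $N$ and $2^n$.

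The hard part is the quantitative bookkeeping of constants. In the first stage one must check that the chosen $\varepsilon$ lies inside the admissibility range of Lemma~\ref{lemma:bound} for every $h$ and every $b \geq 2$, and that the boundary case $h = T(2^n)$ contributes on the same order. In the third stage one must confirm that $2K\tilde{C}_b$, the product of the geometric-sum constant $K$ with the leading factor in $\phi$, stays below $(1+4\delta)/(1+2\delta) \cdot C_b$ uniformly in $b$. This is where the specific exponents $1/8$ and $1/6$ in the definitions of $G$ and $H$ earn their keep: they are just large enough for both summations to converge, yet small enough that the total constant stays below $332\,\tilde{C}_b = C_b$.
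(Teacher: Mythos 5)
Your proposal is correct and follows essentially the same route as the paper: the measure bounds come from applying Lemma~\ref{lemma:bound} with exactly these parameter choices followed by a union bound whose dominant term is $h=1$, and the final discrepancy estimate is obtained by combining the dyadic approximation inequality with Philipp's decomposition lemma and the definitions of $G$ and $H$ (the paper simply cites Philipp's proof of his Theorem~1 for this last step, which is the argument you sketch). The only slips are cosmetic: the leading term $N/2^{T(N)-1}$ is $O(\sqrt{N})$ rather than $O(1)$ (still negligible after dividing by $\sqrt{N\log\log N}$), and the proof yields upper bounds rather than the equalities stated in the lemma, an imprecision already present in the paper itself.
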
         
    
\begin{proof}
To bound $\mu G_{b,n}$ we apply twice Lemma~\ref{lemma:bound}, first 
with $N=2^n$, $k=(h+1)$  and $\varepsilon= 2^{-T(2^n)/8} \phi(2^n) 2^{-n}$,
and then with $N=2^n$, $k=T(2^n)$  and $\varepsilon= 2^{-T(2^n)/8} \phi(2^n) 2^{-n}$.
We write $\exp(x)$ to denote $e^{x}$ and we write $T$ instead of  $T(2^n)$.
Assuming~$n\geq 10$, 
\begin{align*}
\mu G_{b,n} \leq  & \ \mu \left(\bigcup_{a=0}^{2^{T}-1} G(b,n,a,T)\right) +
\sum_{h=1}^{T-1 } \mu \left(\bigcup_{a=0}^{2^{h}-1} G(b,n,a,h)\right)
\\
\leq  & \
\sum_{h=1}^{T-1 }  
2^h  9 \cdot  2^{2(h+1)} (h+3)  \exp\left(-2^{-h/4} \phi^2(2^n)2^{-n}  b^{h+1} \frac{b^2}{6 (h+3)} \right) 
\\
&+ 2^T  9\cdot  2^{2(T+2)} (T+2)  \exp\left(-2^{-T/4} \phi^2(2^n) 2^{-n}  b^{T} \frac{b^2}{6 (T+2)}\right) 
\\
\leq & \ 9 \cdot 2^{3T+5}    (T+2) 
\exp\left(-2^{-T/4} \log\log(2^n) 4   (1+\ 2\delta)^2 b^{T+2}\frac{1}{6 (T+2)} \tilde C^2_b\right)
\\
\leq& \  n^{-(1+4\delta)}.
\end{align*}
To bound $\mu H_{b,n}$ we apply twice Lemma~\ref{lemma:bound}
first letting 
 $N=2^{\ell-1}$, $k=(h+1)$  and \mbox{$\varepsilon= 2^{-h/8} \phi(2^n) 2^{-n}$,}
and then letting
$N=2^{\ell-1}$, $k=T(2^{\ell-1})$  and $\varepsilon=   2^{-T(2^{\ell-1})/8} \phi(2^n) 2^{-n}$.
Again we write $T$ instead of $T(2^{n})$. 
Assuming $\log\log (2^n)\geq 8/\delta^2$,
\begin{align*}
\mu& H_{b,n} = \
\mu \left(\bigcup_{h=1}^{T}
\bigcup_{a=0}^{2^{h}-1}
\bigcup_{\ell=n/2}^{n}
\bigcup_{m=1}^{2^{n-\ell}}
H(b,n,a, h, \ell, m) \right)
\\
\leq&\
 \sum_{\ell=n/2}^{n} 2^{n-\ell} \sum_{h=1}^{T-1}  
9\  2^{3h+6} (h+3) 
\exp\left(-2^{-h/4}   b^{h+3} 2^{2(n-\ell)/3 } 
\log\log(2^n) (1+\delta)^2 \frac{4}{6 (h +3)} \tilde C^2_b   \right)
\\
&\ +
 \sum_{\ell=n/2}^{n} 2^{n-\ell}
9\  2^{3T+4} (T+2) 
\exp\left(- 2^{-T/4}   b^{T+2}   2^{2(n-\ell)/3}   \log\log(2^n) (1+\delta)^2   \frac{4}{6 (T +2)}  \tilde C^2_b \right)
\\
\leq&\
 \sum_{\ell=n/2}^{n} 2^{n-\ell} 
\exp\left(-2^{-1/4}    2^{2(n-\ell)/3 } 
\log\log(2^n) (1+4\delta)  \frac{b^{4}}{24} \right)
\sum_{h=1}^{T-1}  2^{-h}
\\
& +
 \sum_{\ell=n/2}^{n} 2^{n-\ell}
9\  2^{3T+4} (T+2) 
\exp\left(- 2^{-T/4}   b^{T+2}  2^{2(n-\ell)/3} 
  \log\log(2^n) (1+\delta)^2  \frac{1}{6 (T +2)}   \right)
\\
\leq&\
 \exp\left(-2^{-1/4}     \log\log(2^n) (1+3\delta) \frac{b^{4}}{24} \right)
\sum_{\ell=n/2}^n 2^{n/2-\ell-1}  
\\
& 
+
 \exp\left(-   2^{-T/4}   b^{T+2}       \log\log(2^n) (1+3\delta)  \frac{1}{6 (T +2) }  \right) 
\sum_{\ell=n/2}^n 2^{n/2-\ell-1}  
\\
\leq&\ 2\ n^{-(1+3\delta)}.
\end{align*}
Thus, there is  $n_0$ such that for every integer $b$ greater than or equal to $2$,
\[
\mu\left(\bigcup_{n\geq n_0}(G_{b,n}\cup H_{b,n})\right)<
\sum_{n\geq n_0} \left(n^{-1-4\delta}+2n^{-1-3\delta}\right)<\eta.
\]
It follows from Philipp's proof  of \cite[Theorem 1]{Philipp1975} that 
for every real $x$ outside \mbox{$\bigcup_{n\geq n_0}(G_{b,n}\cup H_{b,n})$,}
\[
\limsup_{N\to \infty } \frac{D_N( \{b^j x\}_{j\geq 0}) \sqrt{N}}{\sqrt{\log \log N}} < (1 + 4\delta) C_b,
\]\nopagebreak
where $C_b= 166 + 664/(\sqrt{b}-1)$.\qedhere
\end{proof}

\section{Proof of  Theorem~\ref{thm:main}}


We give an algorithm to compute a real  outside the set 
$\bigcup_{b\geq 2}\bigcup_{n\geq n_0}\left(G_{b,n}\cup H_{b,n}\right)$.
The technique is similar to that used 
in  the computable reformulation of Sierpinski's construction  given in  \cite{BecherFigueira2002}.

The next definition introduces finite approximations to this set.
Recall that by Lemma~\ref{lemma:main}, for every  integer $b\geq 2$, provided $\delta\geq 1/2$
and $n_0=n_0(\eta,\delta)\geq  e^{6/(\delta^2\log 2)}$,
\[
\mu\left(\bigcup_{n\geq n_0}\left(G_{b,n}\cup H_{b,n}\right)\right)
\leq \sum_{n\geq n_0} n^{-(1+4\delta)} + 2 n^{-(1+3\delta)}
\leq \sum_{n\geq n_0} n^{-2}<\eta.
\]

\begin{definition}\label{def:main}
Fix $\delta=1/2$ and fix $\eta\leq 1/8$.
For each integer  $b\geq 2$, let  $z_b$
be the least integer greater than $e^{6 /(\delta\log 2)}=e^{12/\log 2} $ such that  
\begin{align*}
\sum_{k=z_b}^\infty \frac{1}{k^2}< \frac{\eta}{2^b}.
\end{align*}
We define
\begin{align*}
\Delta=&\bigcup_{b= 2}^\infty\bigcup_{m= z_b}^\infty (G_{b,m}\cup H_{b,m}),
\\
s=& \sum_{b=2}^\infty \sum_{k=z_b}^\infty \frac{1}{k^2}.
\end{align*}
Observe that  $\mu(\Delta)< s < \eta$.
\medskip

For each $n$, let
\begin{align*}
b_n=& \max(2,\floor{\log_2 n}),
\\
\Delta_n=& \bigcup_{b=2}^{b_n}  \bigcup_{m=z_b}^{n} ( G_{b,m}\cup H_{b,m}),
\\
s_n=&   \sum_{b=2}^{b_n} \sum_{k=z_b}^{n} \frac{1}{k^2},
\\
r_n =&s-s_n =
     \sum_{b=2}^{b_{n}} \sum_{k=\max(n+1,z_b)}^{\infty} \frac{1}{k^2}
    + \sum_{b=b_{n}+1}^\infty \sum_{k=z_b}^\infty \frac{1}{k^2},
\\
p_n=&2^{2n+2}.
\end{align*}
\end{definition}

The next propositions follow immediately from these definitions.

\begin{proposition} \label{prop:rm} 
 For every $n$, $\mu \left( \Delta -\Delta_{n}\right) \leq r_n.$
\end{proposition}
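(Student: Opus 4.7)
The plan is a direct computation based on the monotonicity/subadditivity of Lebesgue measure, together with the individual bounds on $\mu(G_{b,m}\cup H_{b,m})$ coming from Lemma~\ref{lemma:main}.

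First I would decompose the symmetric difference. By comparing the two unions appearing in the definitions of $\Delta$ and $\Delta_n$, a pair $(b,m)$ with $m\geq z_b$ appears in $\Delta$ but not necessarily in $\Delta_n$; it fails to appear in $\Delta_n$ exactly when $b>b_n$, or when $b\leq b_n$ and $m>n$. Hence
\[
\Delta\setminus \Delta_n\ \subseteq\
\bigcup_{b=2}^{b_n}\bigcup_{m\geq \max(n+1,z_b)} \bigl(G_{b,m}\cup H_{b,m}\bigr)
\ \cup\
\bigcup_{b=b_n+1}^{\infty}\bigcup_{m\geq z_b}\bigl(G_{b,m}\cup H_{b,m}\bigr).
\]

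Next I would apply countable subadditivity of $\mu$ and bound each term $\mu(G_{b,m}\cup H_{b,m})$. By Lemma~\ref{lemma:main}, $\mu(G_{b,m})\leq m^{-1-4\delta}$ and $\mu(H_{b,m})\leq 2m^{-1-3\delta}$, so for $\delta=1/2$ fixed in Definition~\ref{def:main} one has $\mu(G_{b,m}\cup H_{b,m})\leq m^{-3}+2m^{-5/2}$. Since $z_b>e^{12/\log 2}$, for every $m\geq z_b$ the elementary inequality $m^{-3}+2m^{-5/2}\leq m^{-2}$ holds, giving $\mu(G_{b,m}\cup H_{b,m})\leq m^{-2}$.

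Combining these two steps,
\[
\mu(\Delta\setminus\Delta_n)\ \leq\
\sum_{b=2}^{b_n}\sum_{m\geq \max(n+1,z_b)}\frac{1}{m^2}\ +\
\sum_{b=b_n+1}^{\infty}\sum_{m\geq z_b}\frac{1}{m^2},
\]
and this right-hand side is exactly the expression given for $r_n=s-s_n$ in Definition~\ref{def:main}. This completes the proof.

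There is essentially no obstacle here: the statement is a routine subadditivity computation. The only point requiring care is bookkeeping of the index sets (in particular the $\max(n+1,z_b)$ lower bound, which accounts for the fact that $(b,m)$ with $m<z_b$ do not contribute to $\Delta$ in the first place), and the choice of $z_b$ large enough to guarantee the simple bound $m^{-2}$ on $\mu(G_{b,m}\cup H_{b,m})$.
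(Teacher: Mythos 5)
Your proof is correct and is exactly the routine index-set decomposition plus subadditivity argument that the paper has in mind; the paper simply states that this proposition ``follows immediately from these definitions'' and omits the details you supply. The one bookkeeping point worth noting is that Lemma~\ref{lemma:main} gives the bounds $\mu(G_{b,m})\leq m^{-1-4\delta}$ and $\mu(H_{b,m})\leq 2m^{-1-3\delta}$ only for $m>e^{6/(\delta\log 2)}$, which is guaranteed here precisely because the union defining $\Delta$ starts at $m=z_b$ --- and you correctly invoke this.
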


\begin{proposition} \label{prop:rmrq} 
For every $n$ and $q$ such that
$n\leq q$, $\mu \left( \Delta _{q}-\Delta _{n}\right) \leq r_{n}-r_{q}.$
\end{proposition}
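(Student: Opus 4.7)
The strategy is a direct bookkeeping argument: decompose $\Delta_q \setminus \Delta_n$ into the pieces $G_{b,m} \cup H_{b,m}$ that appear in $\Delta_q$ but not in $\Delta_n$, bound each by $1/m^2$, and match the resulting double sum with $r_n - r_q = s_q - s_n$.

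Since $n \leq q$ forces $b_n \leq b_q$, the pairs $(b,m)$ indexing sets present in $\Delta_q$ but not in $\Delta_n$ split into two groups: (i) $2 \leq b \leq b_n$ with $m$ in the tail range $\max(z_b, n+1) \leq m \leq q$, where only the $m$-range has been extended; and (ii) $b_n < b \leq b_q$ with $z_b \leq m \leq q$, where $b$ itself is new. This gives
\begin{align*}
\Delta_q \setminus \Delta_n \ \subseteq\ \bigcup_{b=2}^{b_n} \bigcup_{m=\max(z_b,n+1)}^{q} (G_{b,m}\cup H_{b,m})\ \cup\ \bigcup_{b=b_n+1}^{b_q} \bigcup_{m=z_b}^{q} (G_{b,m}\cup H_{b,m}).
\end{align*}

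I would next invoke Lemma~\ref{lemma:main} with $\delta = 1/2$: for every $m \geq z_b > e^{12/\log 2}$ the bounds $\mu(G_{b,m}) \leq m^{-3}$ and $\mu(H_{b,m}) \leq 2 m^{-5/2}$ are valid, and since $m$ is already so large they combine to yield $\mu(G_{b,m} \cup H_{b,m}) \leq 1/m^2$. Subadditivity of $\mu$ then turns the display above into
\begin{align*}
\mu(\Delta_q \setminus \Delta_n) \ \leq\ \sum_{b=2}^{b_n} \sum_{m=\max(z_b,n+1)}^{q} \frac{1}{m^2}\ +\ \sum_{b=b_n+1}^{b_q} \sum_{m=z_b}^{q} \frac{1}{m^2}.
\end{align*}

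Finally, rewriting $r_n - r_q = s_q - s_n$ by splitting the outer sum of $s_q$ at $b_n$ and subtracting $s_n$ term by term, the first block contributes $\sum_{b=2}^{b_n} \sum_{m=\max(z_b, n+1)}^{q} 1/m^2$ and the second contributes $\sum_{b=b_n+1}^{b_q} \sum_{m=z_b}^{q} 1/m^2$, which is precisely the right-hand side above; this delivers $\mu(\Delta_q \setminus \Delta_n) \leq r_n - r_q$. The only step requiring any care is handling the case $z_b > n$ for some $b \leq b_n$ (which can occur because $z_b$ grows roughly like $2^b/\eta$) by using $\max(z_b, n+1)$ as the lower limit on both sides of the inequality; beyond this purely notational subtlety the argument has no genuine obstacle.
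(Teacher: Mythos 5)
Your proof is correct and is essentially the argument the paper intends: the paper states Proposition~\ref{prop:rmrq} with no proof (``follows immediately from these definitions''), and your bookkeeping --- covering $\Delta_q\setminus\Delta_n$ by the newly added $G_{b,m}\cup H_{b,m}$, bounding each by $m^{-3}+2m^{-5/2}\le m^{-2}$ via Lemma~\ref{lemma:main} with $\delta=1/2$ and $m\ge z_b>e^{12/\log 2}$, and identifying the resulting double sum with $s_q-s_n=r_n-r_q$ --- is exactly the computation being elided, including the correct handling of the $\max(z_b,n+1)$ lower limit.
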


\begin{proposition}\label{lemab} 
For any interval $I$ and any  $n$,
$\mu \left( \Delta \cap I\right) \leq \mu \left( \Delta _{n}\cap I\right) +r_n.$
\end{proposition}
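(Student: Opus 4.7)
My plan is to observe that Proposition \ref{lemab} follows almost immediately from Proposition \ref{prop:rm} together with the fact that $\Delta_n$ is, by construction, a subfamily of the unions defining $\Delta$, so $\Delta_n \subseteq \Delta$.

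Given that inclusion, for any interval $I$ I would write the disjoint decomposition
\[
\Delta \cap I \;=\; (\Delta_n \cap I) \;\sqcup\; \bigl((\Delta \setminus \Delta_n) \cap I\bigr),
\]
and then apply additivity of Lebesgue measure to obtain
\[
\mu(\Delta \cap I) \;=\; \mu(\Delta_n \cap I) \;+\; \mu\bigl((\Delta \setminus \Delta_n) \cap I\bigr).
\]
Monotonicity of $\mu$ gives $\mu((\Delta \setminus \Delta_n) \cap I) \leq \mu(\Delta \setminus \Delta_n)$, and Proposition \ref{prop:rm} bounds this last quantity by $r_n$. Combining these inequalities yields exactly $\mu(\Delta \cap I) \leq \mu(\Delta_n \cap I) + r_n$, as required.

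There is no real obstacle here: the step that does any work is already encapsulated in Proposition \ref{prop:rm}, which in turn is an immediate consequence of countable subadditivity applied to the double series defining $s - s_n = r_n$ (each piece $G_{b,m} \cup H_{b,m}$ that appears in $\Delta$ but not in $\Delta_n$ contributes at most the corresponding tail term $1/k^2$, by the measure estimates in Lemma \ref{lemma:main}). The only thing one has to be slightly careful about is that the intersection with $I$ may strictly decrease the measure of $\Delta \setminus \Delta_n$, but this only makes the desired inequality easier. So the full argument is essentially three lines, amounting to a disjoint decomposition followed by monotonicity and an invocation of Proposition \ref{prop:rm}.
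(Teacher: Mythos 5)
Your proof is correct and matches what the paper intends: the paper states Proposition~\ref{lemab} without proof as an immediate consequence of the definitions, and your argument --- $\Delta_n\subseteq\Delta$, the disjoint decomposition of $\Delta\cap I$, monotonicity of $\mu$, and Proposition~\ref{prop:rm} --- is exactly the implicit route. No gaps.
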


The proof of Theorem~\ref{thm:main} follows from the next lemma.
\begin{lemma}\label{lemma:algorithm}
There is a computable sequence of nested dyadic intervals $I_0,I_1, I_2, \ldots$ such that 
for each $n$, 
$\mu I_n=2^{-n}$ and $\mu(\Delta\cap I_n) < 2^{-n}$.
\end{lemma}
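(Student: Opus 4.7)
The plan is a Sierpiński-style greedy construction, building the intervals $I_n$ inductively so as to preserve the invariant $\mu(\Delta \cap I_n) < 2^{-n}$. Initialize $I_0 = [0,1)$: the invariant holds at $n=0$ since $\mu(\Delta) < s < \eta \leq 1/8 < 1$. At the inductive step, write $J^L, J^R$ for the two dyadic halves of $I_n$. Since they partition $I_n$, we have $\mu(\Delta \cap J^L) + \mu(\Delta \cap J^R) < 2^{-n}$, so by pigeonhole at least one of them, call it $J^\ast$, satisfies $\mu(\Delta \cap J^\ast) < 2^{-(n+1)}$ strictly.

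The algorithmic difficulty is that $\mu(\Delta \cap J)$ is not directly computable, so we use instead the finite approximations $\Delta_q$ and Proposition~\ref{lemab}, which gives $\mu(\Delta \cap J) \leq \mu(\Delta_q \cap J) + r_q$ for all $q$. The algorithm dovetails over $q = n, n+1, n+2, \ldots$: for each $q$ it computes $\mu(\Delta_q \cap J^L)$, $\mu(\Delta_q \cap J^R)$, and a computable upper bound $\hat r_q \geq r_q$ (obtained by truncating the defining double series for $r_q$ and adding an explicit bound on the remainder via $\sum_{k > K} k^{-2} < 1/K$), and it stops at the first pair $(q, J)$ for which $\mu(\Delta_q \cap J) + \hat r_q < 2^{-(n+1)}$, setting $I_{n+1} := J$.

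Correctness is then immediate from Proposition~\ref{lemab}: $\mu(\Delta \cap I_{n+1}) \leq \mu(\Delta_q \cap I_{n+1}) + r_q \leq \mu(\Delta_q \cap I_{n+1}) + \hat r_q < 2^{-(n+1)}$. Termination follows because $\mu(\Delta_q \cap J^\ast) \leq \mu(\Delta \cap J^\ast) < 2^{-(n+1)}$ for every $q$, while $\hat r_q \to 0$, so the stopping condition must eventually succeed on $J^\ast$.

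The main obstacle is verifying that $\mu(\Delta_q \cap J)$ is effectively computable. Unpacking the definitions, $\Delta_q$ is a finite union (over $b \leq b_q$, $m \leq q$, and the combinatorial parameters $a, h, \ell, k$) of sets of the form $G(b,m,a,h)$ and $H(b,m,a,h,\ell,k)$; each of these is the superlevel set of a function of the form $x \mapsto F(M,N,\alpha_1,\alpha_2,\{b^j x\}_{j\geq 0})$, i.e., a count of how many of finitely many iterates $\{b^j x\}$ lie in a fixed dyadic interval $[\alpha_1,\alpha_2)$. For each individual $j$, the condition $\{b^j x\} \in [\alpha_1,\alpha_2)$ carves $(0,1)$ into finitely many intervals with $b^{-j}$-adic endpoints, so the count is a step function of $x$ with finitely many rational breakpoints; consequently each $G$- or $H$-set, and hence $\Delta_q$ and $\Delta_q \cap J$, is a finite union of intervals with rational endpoints whose Lebesgue measure can be computed exactly by a discrete finite procedure. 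Bounding the number of such intervals as $q$ grows also underlies the triple-exponential complexity estimate of Theorem~\ref{thm:main}.
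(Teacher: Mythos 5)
Your proof is correct, and it reaches the lemma by a genuinely different mechanism than the paper. Both arguments share the skeleton (greedy bisection, the finite approximations $\Delta_q$, and Proposition~\ref{lemab} to pass from $\Delta_q$ to $\Delta$), and your computability discussion for $\mu(\Delta_q\cap J)$ is sound and matches what the paper leaves implicit. The difference is in how each argument guarantees that a good half can be certified. The paper fixes the approximation level $p_n=2^{2n+2}$ \emph{in advance} and maintains the quantitative invariant
\[
\mu\left(\Delta_{p_n}\cap I_n\right)+r_{p_n}<\frac{1}{2^{n}}\Big(\eta+\sum_{j=1}^{n}2^{j-1}r_{p_j}\Big),
\]
propagating it through the bisection via Proposition~\ref{prop:rmrq} and then verifying by an explicit computation that $\eta+\sum_{j}2^{j-1}r_{p_j}<1$, which is exactly where the choice $p_n=2^{2n+2}$ is used. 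You instead run an unbounded search over $q$ with a computable upper bound $\hat r_q\to 0$, and prove termination by the strict pigeonhole inequality $\mu(\Delta\cap J^{\ast})<2^{-(n+1)}$ together with $\mu(\Delta_q\cap J^{\ast})\le\mu(\Delta\cap J^{\ast})$; this is logically complete and dispenses with the invariant and its bookkeeping entirely. What you lose is the a priori bound on the terminating $q$: the lemma as stated only asks for computability, so your proof establishes it, but the triple-exponential operation count claimed in Theorem~\ref{thm:main} relies on knowing that $q=p_n=2^{2n+2}$ always suffices, so to recover the complexity statement you would have to reinstate essentially the paper's estimate on $\sum_j 2^{j-1}r_{p_j}$ (or at least bound the halting time of your search by the same computation).
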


\begin{proof}
Proposition~\ref{lemab} establishes, for any interval $I$ and any $m$,
\[
\mu \left( \Delta \cap I\right) <
\mu \left( \Delta_m \cap I\right) +r_m.
\]
Then, to prove the lemma it suffices to give  
a computable sequence of nested dyadic intervals $I_0, I_1, I_2, \ldots$ such that 
for each $n$, 
$\mu I_n=2^{-n}$ and 
$\mu \left( \Delta_{p_{n}}\cap I_{n}\right) +r_{p_{n}}<{2^{-n}}$.
We establish 
\[
p_n= 2^{2 n+2}.
\] 
This value of $p_n$
 is large enough so
that the error $r_{p_n}$ is sufficiently small to guarantee that
even if all the intervals  in $\Delta-\Delta_{p_n}$
 fall in the half of $I_n$ that will be chosen as $I_{n+1}$,
 $I_{n+1}$ will not be completely covered by $\Delta$. 
We define the $I_0, I_1, \ldots$  inductively.
\medskip

{\em Base case, $n=0$}.
Let $I_0= \left[0,1\right)$.
We need to check that 
 $\mu \left( \Delta_{p_0}\cap I_{0}\right) +r_{p_0}< 2^{0}$.
Since $p_0=2^{2\cdot 0+2}= 4$,  $b_{p_0}=2$ and $z_b=  2^2/\eta\geq 2^5 $,
\[
\Delta_{p_0}= \bigcup_{b=2}^{b_{p_0}}  \bigcup_{n=z_b}^{p_0}  (G_{b,n}\cup H_{b,n}) =\emptyset.
\]
Since $I_0=(0,1)$  and $\Delta_{p_0}=\emptyset $, $\Delta_{p_0}\cap I_{0}=\emptyset $.
Then,
\begin{align*}
r_{p_0}=  s =  \sum_{b=2}^\infty \sum_{k=z_b}^\infty \frac{1}{k^2}.
\end{align*}
We conclude  $\mu \left( \Delta_{p_0}\cap I_{0}\right) +r_{p_0} = 0 + s < \eta< 1$.
\medskip

{\em Inductive case, $n>0$}.
Assume that  for  each $m=0,1, \ldots, n-1$,
\[
\mu \big( \Delta _{p_{m}}\cap I_{m}\big) +r_{p_{m}}
<\frac{1}{2^{m}}\Big( \eta
+\sum\limits_{j=1}^{m}2^{j-1}\cdot r_{p_{j}}\Big),
\]
where $ p_{m}= 2^{2m+2}$. 
Note that for $m=0$, $\sum_{j=1}^m$ is the empty sum.
We split the interval $I_{n-1}$ in two halves of measure ${2^{-n}}$,
given with binary representations of their endpoints as
\begin{align*}
I^{0}_{n} = \left[ 0.d_{1}\ldots d_{n-1}
\ , \ 0.d_{1} \ldots d_{n-1} 1 \right]
\text{ and }
I^{1}_{n}  = \left[ 0.d_{1} \ldots  d_{n-1} 1\ ,  \ 0.d_{1}\ldots
d_{n-1} 111111\ldots \right].
\end{align*}
Since $I^{0}_{n}\cup I^{1}_{n}$ is equal to interval $I_{n-1}$, we have
\[
\mu \left( \Delta _{p_{n}}\cap I^{0}_{n}\right) +
\mu \left(\Delta _{p_{n}}\cap I^{1}_{n}\right) 
=\mu \left( \Delta_{p_{n}}\cap I_{n-1}\right).
\] 
Since $p_{n}\geq p_{n-1}$, we obtain
\[
\mu \left( \Delta_{p_{n}}\cap I^{0}_{n}\right) +
\mu \left( \Delta _{p_{n}}\cap I^{1}_{n}\right) 
\leq 
\mu \left( \Delta _{p_{n-1}}\cap I_{n-1}\right) +r_{p_{n-1}}-r_{p_{n}}.
\] 
Adding
$r_{p_{n}}+r_{p_{n}}$ to both sides of this inequality we obtain
\[
\left( \mu \left( \Delta _{p_{n}}\cap I^{0}_{n}\right) +r_{p_{n}}\right) +
\left( \mu \left( \Delta _{p_{n}}\cap I^{1}_{n}\right) +r_{p_{n}}\right) 
\leq 
\mu \left( \Delta_{p_{n-1}}\cap I_{n-1}\right) +r_{p_{n-1}}+r_{p_{n}}. 
\]
Then, by the inductive condition for $m=n-1$,
\[
\left( \mu \left( \Delta _{p_{n}}\cap I^{0}_{n}\right) + r_{p_{n}}\right) +
\left( \mu \left( \Delta _{p_{n}}\cap I^{1}_{n}\right) + r_{p_{n}}\right) 
<   \frac{1}{2^{n-1}}\Big(\eta +\sum\limits_{j=1}^{n}2^{j-1}\cdot r_{p_{j}}\Big).
\]
Hence, it is impossible that the terms
\[\mu\left( \Delta _{p_{n}}\cap I^{0}_{n}\right) +r_{p_{n}}
\text { and }
\mu \left( \Delta _{p_{n}}\cap I^{1}_{n}\right) +r_{p_{n}}
\]
be both greater than or equal
to
\[
\frac{1}{2^n}\Big( \eta+\sum\limits_{j=1}^{n}2^{j-1}\cdot r_{p_{j}}\Big).
\] 
Let $d \in \{0, 1\}$ be smallest such that 
\[
\mu \left( \Delta _{p_{n}}\cap I_{n}^{d}\right) +r_{p_{n}}
<
\frac{1}{2^n}\Big(\eta
+\sum\limits_{j=1}^{n}2^{j-1}\cdot r_{p_{j}}\Big)
\]
and define  
\[
I_n=I_n^d.
\]
To  verify  that $I_n$ satisfies the inductive condition
it suffices to verify that 
\[
\eta
+\sum\limits_{j=1}^{n}2^{j-1}\cdot r_{p_{j}} <1.
\]
Developing the definition of $r_{p_j}$ we obtain
\begin{align*}
\sum\limits_{j=1}^{n}2^{j-1}\cdot r_{p_{j}} =&\ 
\sum\limits_{j=1}^{n}2^{j-1}\left(
      \sum_{b=2}^{b_{p_j}} \sum_{k=\max(z_b,p_j+1)}^{\infty} \frac{1}{k^2}
    + \sum_{b=b_{p_j}+1}^\infty \sum_{k=z_b}^\infty \frac{1}{k^2}\right)
\\
=&\ 
\left( 
\sum\limits_{j=1}^{n}2^{j-1}\sum_{b=2}^{b_{p_j}}
\sum_{k=\max(z_b,p_j+1)}^{\infty} \frac{1}{k^2}\right)
    +
\left(\sum\limits_{j=1}^{n}  2^{j-1}
 \sum_{b=b_{p_j}+1}^\infty\sum_{k=z_b}^\infty \frac{1}{k^2}\right)
\\
<&\left(\sum\limits_{j=1}^{n}2^{j-1}
      b_{p_j} \sum_{k=p_j+1}^{\infty} \frac{1}{k^2}\right)
    +
\left(\sum\limits_{j=1}^{n}  2^{j-1}
 \sum_{b=b_{p_j}+1}^\infty\frac{\eta}{2^b}\right)
\\
<&\ 
\left(\sum\limits_{j=1}^{n}2^{j-1}       \frac{b_{p_j}}{p_j +1}\right)
+
\left(\sum\limits_{j=1}^{n}  2^{j-1} \frac{\eta}{ 2^{b_{p_j}}} \right)
\\
<&\ 
\left(\sum\limits_{j=1}^{n}2^{j-1}      \frac{2j+2}{2^{2j+2} +1}\right)
+
\left(\sum\limits_{j=1}^{n}  2^{j-1} \frac{\eta }{ 2^{2j+2  } } \right)
\\
<&\  \frac{3}{4}+ \frac{\eta}{4}
\\
< &\  \frac{7}{8}.
\end{align*}
Then, using that $\eta<1/8$ we obtain the desired result,
\[
\mu
\left( \Delta _{p_{n}}\cap I_{n}\right) +r_{p_{n}}
<\frac{1}{2^{n}}\Big( \eta+\sum\limits_{j=1}^{n}2^{j-1}\cdot r_{p_{j}}\Big) 
<\frac{1}{2^{n}} \Big(\eta +\frac{7}{8}\Big)
<\frac{1}{2^{n}}.
\]
\end{proof}

\begin{algorithm}  
\hspace{\dimexpr-\fboxrule-\fboxsep\relax}\fbox{
\begin{minipage}[t]{\textwidth}
{\refstepcounter{theorem}\par\medskip\noindent \textbf{Algorithm \thetheorem\\ } \rmfamily} 

Computation of  the binary expansion $d_1 d_2 \ldots$ of a number $x$
such that for every integer base~$b$,
$\limsup_{N\to \infty} D_N(\{b^j x\}_{j\geq 0})(N/\log \log N)^{1/2}< 3\ C_b$, 
where  $C_b= 166 + 664/(\sqrt{b}-1)$.
\begin{align*}
\omit\rlap{$F(M,N,\alpha_1, \alpha_2, (x_j)_{j\geq 1}) =
\big| \#\{j: M\leq j< M+N:  \alpha_1\leq x_j< \alpha_2 \} -  (\alpha_2-\alpha_1) N \big|,$}
\\
&\phi(N)=\left(2 + 8/(\sqrt{b}-1)\right) \sqrt{N\log\log N},
\\
&T(N)=\lfloor \log N/\log 4\rfloor+1,
\\
&G(b,n, a, h)\!=\!
\{x\in(0,1): F(0,2^n, \alpha_1,\alpha_2,\{b^j x\}_{j\geq 0})\geq  2^{-h/8} \phi(2^n)\},
\\
&\ \ \ \ \text{where }\alpha_1=a 2^{-(h+1)},\  \alpha_2= (a+1) 2^{-(h+1)}, \text{ if } 1\leq h<T(2^{n});
\\
&\ \ \ \ \text{and }  \alpha_1=a 2^{-h},\ \alpha_2= (a+1) 2^{-h}, \text{ if } h=T(2^{n}).
\\
&H(b,n,a, h, \ell, m)\!=\!
\{x\in(0,1): F(2^n+ m2^{\ell},2^{\ell -1},\beta_1,\beta_2,\{b^j x\}_{j\geq 0})\geq  2^{-h/8} 2^{(\ell-n-3)/6}\phi(2^n)\},
\\
&\ \ \ \ \text{where }\beta_1=a 2^{-(h+1)},\ \beta_2= (a+1) 2^{-(h+1)}, \text{ if } 1\leq h<T(2^{\ell-1});
\\
&\ \ \ \ \text{and }  \beta_1=a 2^{-h},\ \beta_2= (a+1) 2^{-h}, \text{ if } h=T(2^{\ell-1}).
\\
&G_{b,n}=
\bigcup_{h=1}^{T(2^n)}
\bigcup_{a=0}^{2^{h}-1} G(b,n,a,h),
 \\
&H_{b,n}=
\bigcup_{h=1}^{T(2^{n})}
\bigcup_{a=0}^{2^{h}-1}
\bigcup_{\ell=n/2}^{n}
\bigcup_{m=1}^{2^{n-\ell}}
H(b,n,a, h, \ell, m).
\end{align*}
{\tt For each base $b$ fix  $z_b\geq 12/\log 2$  such that 
$\sum_{k=z_b}^\infty 1/k^2< 1/(8 \cdot 2^{b})$
\\
$I_{0}=[0,1)$
\\
n=1
\\
repeat
\begin{enumerate}
\item[]  $I^0_n$ is the left half of $I_{n-1}$ and $I^1_n$ is the right half of~$I_{n-1}$
\\
$\begin{array}{l}
\displaystyle
p_n=2^{2n+2}
\\
\displaystyle
b_{p_n}= 2n+2 
\\\displaystyle
\Delta_{p_n}= \bigcup_{b=2}^{b_{p_n}}  \bigcup_{k=z_b}^{p_n} ( G_{b,k}\cup H_{b,k})
\\\displaystyle
r_{p_n}= \sum_{b=2}^{b_{p_n}} \sum_{k=\max(z_b,p_n+1)}^{\infty} k^{-2}
  + \sum_{b=b_{p_n}+1}^\infty \sum_{k=z_b}^\infty k^{-2}
\end{array}$
\item[] if  $\displaystyle \mu(\Delta_{p_n}\cap I^0_n) + r_{p_n}< {2^{-n}}$  then 
\begin{enumerate}
\item[] $d_n=0$
\\ $I_n=I^0_n$
\end{enumerate}
else
\begin{enumerate}
\item[] $d_n=1$
\\ $I_n=I^1_n$
\end{enumerate}
n=n+1
\end{enumerate}
forever

}\end{minipage}}
\end{algorithm}

Let's see that the  number $x=0.d_1 d_2 d_3$ obtained by the next Algorithm \thetheorem \ 
is external to 
\[
\Delta=\bigcup_{b= 2}^\infty\bigcup_{n=n_{z_b}}^\infty (G_{b,n}\cup H_{b,n})
\]

Suppose not. 
Then, there must be an open interval  $J$ in $\Delta $ such that $x \in J$. 
Consider the intervals $I_1^{d_{1}},I_2^{d_{2}},I_3^{d_{3}},\ldots$ 
By our construction, $x $ belongs each of them. 
Let $j$ be the smallest index such that  
$I_j^{d_{j}}\subset J$, which exists because 
the measure of $I_n^{d_{n}}$ goes to $0$ as  $n$ increases. 
Then  $I_j^{d_{j}}$ is fully covered by $\Delta$. 
This  contradicts that in our construction  at each step $n$ we choose  an interval
$I_n^{b_{n}}$  not fully covered by $\Delta $, 
because as  ensured by the proof of Lemma~\ref{lemma:algorithm}, 
\[
\mu(\Delta\cap I_n^{d_n})< 2^{-n}.
\]
We conclude that  $x$ belongs to no interval of $\Delta $.
Recall that we fixed $\delta=1/2$; thus, by Lemma~\ref{lemma:main}, for
for  each integer~$b$ greater than or equal to~$2$, 
\[
\limsup_{N\to \infty }\frac{D_N( \{b^n x\}_{n\geq 0}) \sqrt{N}}{\sqrt{\log \log N}}< 3 C_b,
\]
where $C_b$ is Philipp's constant.

Finally, we count the number of mathematical operations
that the algorithm performs at step $n$ 
to compute the digit $d_n$ in the binary expansion of $x$.
To determine $d_n$,  the algorithm  tests for  $d_n \in \{0,1\}$ whether
\[
\mu(\Delta_{p_n}\cap I^{d_n}_n)+r_{p_n} < 2^n.
\] 
The naive way to obtain this is by constructing the set
$\Delta_{p_n}=\bigcup_{b=2}^{b_{p_n}} \bigcup_{k=z_b}^{p_n} G_{b, k} \cup H_{b,k}$, 
for $b=2, 3, \ldots, b_{p_n}$.
The more demanding is  $G_{b_{p_n},p_n}$ which requires 
 the examination of all the  strings   of digits in  $\{0, \ldots, b_{p_n}-1\}$ of length $2^{p_n}$.
Since $b_{p_n}=2n+2$ and $p_n=2^{2n+2}$,
the number of strings  to be examined is
\[
\sum_{b=2}^{b_{p_n}} b^{2^{p_{n}}} < 2 b_{p_n}^{2^{p_n}} = (2n+2)^{2^{2^{(2n+2)} }}.
\]
Thus, with this naive way, the algorithm  at step $n$  performs 
in the order of  
\[
 (2n+2)^{2^{2^{2n+2} }}
\]
many mathematical operations.

An incremental construction of the sets $G_{b,n} $ and $H_{b,n }$  
can lower the number of needed mathematical operations, 
but would not help to lower the triple-exponential  order of computational complexity.
\bigskip

\noindent
\textbf{Acknowledgements.}  We thank Robert~Tichy for early discussions on this work.  Becher is
supported by the University of Buenos Aires and CONICET.  Becher is a member of the Laboratoire
International Associ\'e INFINIS, CONICET/Universidad de Buenos Aires-CNRS/Universit\'e Paris
Diderot.  Scheerer is supported by the Austrian Science Fund (FWF): I 1751-N26; W1230, Doctoral
Program ``Discrete Mathematics''; and SFB F 5510-N26.  Slaman is partially supported by the
National Science Foundation grant DMS-1600441. The present paper was completed while Becher visited
the Erwin Schr\"odinger International Institute for Mathematics and Physics, Austria.

\bibliographystyle{plain}
\bibliography{ed}

\begin{minipage}{\textwidth}\small
$ $\\
\noindent
Ver\'onica Becher
\\ Departamento de  Computaci\'on,   Facultad de Ciencias Exactas y Naturales
\\Universidad de Buenos Aires \& ICC,  CONICET, Argentina.
\\Pabell\'on I, Ciudad Universitaria, C1428EGA Buenos Aires, Argentina
\\vbecher@dc.uba.ar
\medskip\\
Adrian-Maria Scheerer
\\Institute of Analysis and  Number Theory
\\Graz University of Technology
\\A-8010 Graz, Austria
\\scheerer@math.tugraz.at
\medskip\\
Theodore A. Slaman
\\University of California Berkeley
\\Department of Mathematics
\\719 Evans Hall \#3840, Berkeley, CA 94720-3840 USA
\\slaman@math.berkeley.edu
\end{minipage}
\end{document}